\newtheorem{theorem}{Theorem}[section]
\newtheorem{defn}[theorem]{Definition}
\newtheorem{lemma}[theorem]{Lemma}
\newtheorem{eple}[theorem]{Example}
\newtheorem{rmk}[theorem]{Remarks}
\newtheorem{dsc}[theorem]{Discussion}
\newtheorem{nota}[theorem]{Notation}
\newsavebox{\indbin}
\savebox{\indbin}{\begin{picture}(0,0)
\newlength{\gnu}
\settowidth{\gnu}{$\smile$} \setlength{\unitlength}{.5\gnu}
\put(-1,-.65){$\smile$} \put(-.25,.1){$|$}
\end{picture}}
\newcommand{\be}{\begin{enumerate}}
\newcommand{\bd}{\begin{defn}}
\newcommand{\bt}{\begin{theorem}}
\newcommand{\bl}{\begin{lemma}}
\newcommand{\ee}{\end{enumerate}}
\newcommand{\ed}{\end{defn}}
\newcommand{\et}{\end{theorem}}
\newcommand{\el}{\end{lemma}}
\begin{document}
\title{Bounding the number of maximal torsion cosets on subvarieties of algebraic tori}
\author{Tristram de Piro}
\address{Mathematics Department, The University of Edinburgh, Kings Buildings, Mayfield Road, Edinburgh, EH9 3JZ} \email{depiro@maths.ed.ac.uk}
\thanks{The author was supported by the William Gordon Seggie Brown research fellowship}
\begin{abstract}
We obtain bounds on the number of maximal torsion cosets for
algebraic subvarieties $V\subset\mathbb{G}_{m}^{n}$, defined over ${\mathbb
Q}$, using model theoretic methods.
\end{abstract}
\maketitle

Let $V$ be an algebraic variety defined over the rationals ${\mathbb Q}$, with
$V\subset\mathbb{G}_{m}^{n}({\mathbb C})$, where $\mathbb{G}_{m}^{n}({\mathbb C})$ is the multiplicative subgroup of ${\mathbb C}^{n}$. A \emph{cyclotomic point} $\overline{\omega}$ on $V$ is a point of the form
$(\omega_{1},\ldots,\omega_{n})$, with $\omega_{i}$ a root of unity
for $1\leq i\leq n$. A \emph{torsion coset} is a set of the form
${\bar\omega}T$ where $T$ is a connected algebraic subgroup of
${\mathbb G}_{m}^{n}({\mathbb C})$. Any connected algebraic subgroup $T$ of
${\mathbb G}_{m}^{n}({\mathbb C})$ is isomorphic to ${\mathbb G}_{m}^{r}({\mathbb
C})$ for some $0\leq r\leq n$. We can write the isomorphism $\Phi$
in the following form;\\
\begin{equation*}
$$ $\Phi:{\mathbb G}_{m}^{r}({\mathbb C})\rightarrow T$\\

$:(t_{1},\ldots,t_{r})\mapsto (t_{1}^{m_{11}}\ldots
t_{r}^{m_{1r}},\ldots,t_{1}^{m_{n1}}\ldots t_{r}^{m_{nr}})$ $$
\end{equation*}

We let $\textbf{M}=(m_{ij})_{1\leq i\leq n}^{1\leq j\leq r}$ be
the matrix defining the isomorphism $\Phi$, so we can write an
element of a torsion coset in the form $\bar\omega
{\textbf{t}}^{\textbf{M}}$, where
$\textbf{t}=(t_{1},\ldots,t_{r})\in {\mathbb G}_{m}^{r}(\mathbb C)$. We
define a torsion coset $\bar\omega T\subset V$ to be \emph{maximal} if
for any torsion coset $\bar\omega 'T'\subset V$, with $\bar\omega
T\subset\bar\omega 'T'$, we have that $\bar\omega T=\bar\omega'
T'$.\\

 We make the following straightforward observations about
maximal torsion cosets. First,\\

\begin{lemma}

${\overline {V(K^{cycl})}}=\bigcup_{i=1}^{N}\bar\omega_{i}T_{i}$\\

where $i$ runs over all the maximal torsion cosets on $V$.

\end{lemma}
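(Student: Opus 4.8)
The plan is to establish the two inclusions separately; the finiteness of $N$ will then drop out by combining them with the classical statement (Laurent's theorem on the Manin--Mumford problem for tori) that $\overline{V(K^{cycl})}$ is a \emph{finite} union of torsion cosets. For the inclusion $\supseteq$, fix a maximal torsion coset $\bar\omega_i T_i\subseteq V$. I would first recall the elementary fact that the torsion points of ${\mathbb G}_m^r({\mathbb C})$ (the tuples all of whose entries are roots of unity) are Zariski dense in ${\mathbb G}_m^r({\mathbb C})$: since every proper Zariski closed subset of ${\mathbb G}_m({\mathbb C})$ is finite, the roots of unity are dense in ${\mathbb G}_m({\mathbb C})$, and one concludes the general case by induction on $r$. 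Transporting this density along the map $\Phi$ described above --- which, being given by a matrix of integers, carries torsion points to torsion points --- the torsion points of $T_i$ are dense in $T_i$, and translating by the root of unity tuple $\bar\omega_i$ shows the torsion points of $\bar\omega_i T_i$ are dense in $\bar\omega_i T_i$. Each such point lies in $\bar\omega_i T_i\subseteq V$ with all coordinates roots of unity, hence is a cyclotomic point of $V$; therefore $\bar\omega_i T_i=\overline{(\bar\omega_i T_i)(K^{cycl})}\subseteq\overline{V(K^{cycl})}$, and taking the union over all maximal torsion cosets yields this inclusion.

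For $\subseteq$, note first that each cyclotomic point $\bar\omega$ of $V$ is itself a torsion coset (take $r=0$, $T=\{1\}$) contained in $V$. Next, every torsion coset contained in $V$ lies inside a maximal one: a strict containment $\bar\omega T\subsetneq\bar\omega'T'$ forces $\dim T<\dim T'$, so every ascending chain of torsion cosets in $V$ has length at most $n+1$ and maximal elements exist above any given coset. Hence $V(K^{cycl})\subseteq\bigcup_i\bar\omega_i T_i$, the union taken over all maximal torsion cosets of $V$. Now invoke the finiteness result: $\overline{V(K^{cycl})}$ is a finite union of torsion cosets, each of which is contained in $V$ (as $V$ is Zariski closed) and therefore in some $\bar\omega_i T_i$; comparing this with the inclusion $\supseteq$ already proved, one sees that the maximal torsion cosets of $V$ are precisely the maximal members of this finite list. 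In particular there are only finitely many of them, so $\bigcup_i\bar\omega_i T_i$ is Zariski closed and, containing $V(K^{cycl})$, contains $\overline{V(K^{cycl})}$. This gives the reverse inclusion, the equality, and the finiteness of $N$ simultaneously.

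The only ingredient beyond elementary manipulations --- density of torsion in tori, the dimension bound on chains of cosets, and the fact that a finite union of closed sets is closed --- is the qualitative finiteness statement for $\overline{V(K^{cycl})}$, and that is the step I expect to be the real obstacle if a self-contained treatment were wanted. Since the whole point of the paper is to make $N$ effective, it is natural to take this qualitative input for granted at this stage.
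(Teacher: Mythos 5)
Your proof is correct and follows the same two-inclusion strategy as the paper's own (much terser) argument: cyclotomic points are Zariski dense in every torsion coset, giving $\bigcup_{i}\bar\omega_{i}T_{i}\subseteq\overline{V(K^{cycl})}$, while every cyclotomic point of $V$ is itself a torsion coset and hence lies in a maximal one, giving the other inclusion. The only substantive difference is that you make explicit the reliance on the qualitative finiteness of the set of maximal torsion cosets (without which the union need not be Zariski closed), a point the paper leaves implicit and only supports by quoting Schmidt's Theorem $8E^{*}$ immediately after the lemma.
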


 This is
clear by noting that a cyclotomic point on $V$ is itself a torsion
coset, hence we obtain the left hand inclusion. We obtain equality
by the fact that cyclotomic points are Zariski dense in any
torsion coset. Secondly, if ${\bar a} S\subseteq V$ is a coset of
a connected algebraic subgroup of ${\mathbb G}_{m}^{n}$, then;\\

$\overline{{\bar a} S(K^{cycl})}=\bar w T$\\

where $T$ is a connected algebraic subgroup of ${\mathbb G}_{m}^{n}(K)$ and
$\bar w$ is a cyclotomic point.\\

In order to see this, observe that $\overline{{\bar
a}S(K^{cycl})}=\bigcup_{i=1}^{N}\bar\omega_{i}T_{i}$, where $i$
runs over all the maximal torsion cosets on $\bar a S$. We claim
that in fact $N=1$. Let $\bar w$ be a cyclotomic point on ${\bar
a} S$, then we have that ${\bar w_{1}}^{-1}\bar w\in S$ and $\bar
w\in\bar w_{1}S$. Hence ${\bar a}S(K^{cycl})=({\bar
w_{1}}S)(K^{cycl})={\bar w_{1}}(S(K^{cycl}))$. We now claim that
$\overline{S(K^{cycl})}$ is connected, this follows easily from
the fact that $S$ is a connected algebraic subgroup of
${\mathbb G}_{m}^{n}(K)$ and the classification of such groups. It follows
that $\overline{{\bar a}S(K^{cycl})}={\bar w_{1}}T_{1}'$, but, by
maximality of ${\bar w_{1}}T_{1}$, we have that $T_{1}=T_{1}'$ as
required.\\

An important consequence of the above result is the following;\\

\begin{lemma}

Let $\bar w_{i}T_{i}$, for $1\leq i\leq n$, enumerate the maximal
torsion cosets on $V$ and suppose that;\\

$\bigcup_{i=1}^{N}\bar w_{i}T_{i}\subseteq\bigcup_{i=1}^{M}\bar
a_{i}S_{i}\subseteq V$\\

where the $S_{i}$ are connected algebraic subgroups of
${\mathbb G}_{m}^{n}(K)$. Then $N\leq M$.

\end{lemma}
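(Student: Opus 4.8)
The plan is to build an injection from the set of maximal torsion cosets on $V$ into the index set $\{1,\dots,M\}$ and conclude $N\leq M$ by counting. First I would recall that each $\bar w_i T_i$ is a coset of the connected algebraic subgroup $T_i$, which is isomorphic to some $\mathbb{G}_m^{r_i}(\mathbb C)$ and hence is an irreducible Zariski-closed subvariety. Since each $\bar a_j S_j$ is also Zariski closed (being a coset of a connected, hence closed, algebraic subgroup), the union $\bigcup_{j=1}^M \bar a_j S_j$ is closed, and irreducibility of $\bar w_i T_i$ forces $\bar w_i T_i\subseteq \bar a_{j(i)}S_{j(i)}$ for some index $j(i)$. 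This assignment $i\mapsto j(i)$ is the candidate injection.

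Next I would invoke the observation proved just before the lemma: since $\bar a_{j(i)}S_{j(i)}\subseteq V$, we have $\overline{\bar a_{j(i)}S_{j(i)}(K^{cycl})}=\bar\omega T$ for a single torsion coset $\bar\omega T$, and because $\bar a_{j(i)}S_{j(i)}$ is closed this torsion coset satisfies $\bar\omega T\subseteq \bar a_{j(i)}S_{j(i)}\subseteq V$. Now $\bar w_i T_i$ is itself a torsion coset, so its cyclotomic points are Zariski dense in it; these points lie in $\bar a_{j(i)}S_{j(i)}(K^{cycl})$, and therefore
\[
\bar w_i T_i=\overline{(\bar w_i T_i)(K^{cycl})}\subseteq \overline{\bar a_{j(i)}S_{j(i)}(K^{cycl})}=\bar\omega T\subseteq V .
\]
By maximality of $\bar w_i T_i$ this containment is an equality, so $\bar w_i T_i=\overline{\bar a_{j(i)}S_{j(i)}(K^{cycl})}$.

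Injectivity of $i\mapsto j(i)$ is then immediate: if $j(i_1)=j(i_2)$, the displayed identity gives $\bar w_{i_1}T_{i_1}=\overline{\bar a_{j(i_1)}S_{j(i_1)}(K^{cycl})}=\overline{\bar a_{j(i_2)}S_{j(i_2)}(K^{cycl})}=\bar w_{i_2}T_{i_2}$, and since the $\bar w_i T_i$ are listed without repetition we get $i_1=i_2$. Hence $\{1,\dots,N\}$ injects into $\{1,\dots,M\}$, so $N\leq M$. I expect the only point needing care to be the step that the irreducible variety $\bar w_i T_i$ sits inside a single $\bar a_j S_j$ rather than being spread over several of them, together with checking that each $\bar a_j S_j$ is genuinely Zariski closed so that the preceding observation applies verbatim; both are routine once one has irreducibility of torsion cosets and closedness of cosets of connected subgroups of $\mathbb{G}_m^n$.
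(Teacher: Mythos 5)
Your proof is correct and rests on the same two pillars as the paper's: the observation proved just before the lemma, that $\overline{\bar a S(K^{cycl})}$ is a single irreducible torsion coset when $\bar a S\subseteq V$, together with maximality of the $\bar w_{i}T_{i}$. The paper packages the count as the irredundancy of the irreducible decomposition of $\bigcup_{i=1}^{N}\bar w_{i}T_{i}=\bigcup_{j=1}^{M}\overline{\bar a_{j}S_{j}(K^{cycl})}$, whereas you unfold that uniqueness statement into an explicit injection $i\mapsto j(i)$ (using that an irreducible set inside a finite union of closed sets lies in one of them); these are the same argument in different clothing.
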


\begin{proof}

We have that $\bigcup_{i=1}^{N}\bar
w_{i}T_{i}=\overline{\bigcup_{i=1}^{M}\bar
a_{i}S_{i}(K^{cycl})}=\bigcup_{i=1}^{M}\overline{a_{i}S_{i}(K^{cycl})}$.
By the previous argument, this is a union of $M$ irreducible
torsion cosets. As the irreducible decomposition of
$\bigcup_{i=1}^{N}\bar w_{i}T_{i}$ is irredundant, by maximality
of each torsion coset $\bar w_{i}T_{i}$, we have that $N\leq M$ as required.\\

\end{proof}

The following result is given in \cite{Sc}, (Theorem
$8E^{*}$);\\

Suppose that $V$ is defined by equations of total degree $\leq d$,
then there exist finitely many maximal torsion cosets on $V$ and,
moreover, the number is bounded by $exp(3N(d)^{3\over
2}log(N(d)))$ where $N(d)=C^{n+d}_{d}$. \\

The purpose of this paper is to find an alternative
bound in the degree $d$ and dimension $n$, using methods from model theory.\\

We define the exponent of a torsion coset $\bar\omega T$ to be any
multiple of its order as an element of the group ${{\mathbb G}_{m}^{n}/T}$.
By the result given above, there must exist a single exponent for
all maximal torsion cosets on $V$. In \cite{Mc}, (Theorem
4.2), the following result is proved;\\

Suppose that $N(V)$, the Newton polygon associated to $V$, has
diameter $D(V)$. Then every $(n-k)$-dimensional maximal torsion
coset on $V$ has an exponent $mP_{N}$ for $m\leq
D(V)^{2k}k^{k/2}$, where $N=Card(N(V))\leq C^{n+d}_{d}$ and
$P_{N}$ is the product of all primes up to $N$. (By results of
Iskander Aliev, the bound on $m$ can be improved to $D(V)^{2k}$.)\\

Now let $K$ be a uniform exponent for all the maximal torsion
cosets on $V$, for example we can take $K=tP_{N}$, where $t\leq
D(V)^{n(n-1)}$, assuming the result of Aliev. Suppose that we are
given a maximal torsion coset $\bar \omega T$ of exponent $K$,
then we have that ${\bar \omega}^{K}\in T$. Suppose that
$\Phi(\bar t)={\bar \omega}^{K}$. Clearly, ${\mathbb G}_{m}^{r}(\mathbb C)$
is closed under taking $K$'th roots, so we can find $\bar t_{1}\in
{\mathbb G}_{m}^{r}(\mathbb C)$ such that ${\bar t_{1}}^{K}=\bar t$. Then
$\Phi({\bar t_{1}}^{K})=\Phi(\bar t_{1})^{K}={\bar \omega}^{K}$.
Therefore, $(\Phi(\bar t_{1})^{-1}\bar \omega)^{K}=\textbf{1}$ and
$\Phi(\bar t_{1})^{-1}\bar \omega$ represents the same torsion
coset as $\bar \omega$. Enumerate elements $\{{\bar
w}_{1},\ldots,{\bar w}_{N}\}$ representing the maximal torsion
cosets on $V$ such that ${\bar w}_{j}^{K}=\textbf{1}$ for $1\leq
j\leq N$. We can write each ${\bar w}_{j}$ as
$(\omega_{1j},\ldots,\omega_{nj})$ where $\omega_{ij}$ is a
primitive $L_{ij}$'th root of unity with $L_{ij}|K$, for $1\leq
i\leq n$. Now choose a primitive $K$'th root of unity $\xi$ with
${\xi}^{K/L_{ij}}=\omega_{ij}$. We consider the following
cyclotomic extensions;\\

(i). ${\mathbb Q}(\xi)/{\mathbb Q}$;\\

In this case $Gal({\mathbb Q}(\xi)/{\mathbb
Q})=U_{p_{1}^{m_{1}}}\times\ldots\times U_{p_{r}^{m_{r}}}$, where
$K=p_{1}^{m_{1}}\ldots p_{r}^{m_{r}}$ is the prime factorisation
of $K$ and $U_{p_{j}^{m_{j}}}$ is the cyclic group of units in the
multiplicative group $({\mathbb Z}/{p_{j}^{m_{j}}\mathbb
Z})^{*}$. This is an abelian group with generators
$\{\sigma_{1},\ldots,\sigma_{r}\}$.\\

a. If $K$ is odd, $(2,K)=1$, hence $2\in U_{K}$ and the map
$\sigma(\xi)=\xi^{2}$ determines an element of $Gal({\mathbb
Q}(\xi)/{\mathbb Q})$. We then have that
$\sigma(\omega_{ij})=\omega_{ij}^{2}$ as well.\\

b.  If $K=2L$ with $L$ odd, then $(L+2,2L)=1$ and
$\sigma(\xi)={\xi}^{L+2}=-\xi^{2}$ determines an element of
$Gal({\mathbb Q}(\xi)/{\mathbb Q})$, then
$\sigma(\omega_{ij})=(-1)^{K/L_{ij}}\omega_{ij}^{2}$.\\

c.  If $K=4L$, then $(2L+1,4L)=1$ and
$\sigma(\xi)={\xi}^{2L+1}=-\xi$ determines an element of
$Gal({\mathbb Q}(\xi)/{\mathbb Q})$, then
$\sigma(\omega_{ij})=(-1)^{K/L_{ij}}\omega_{ij}$.\\

We extend $\sigma$ to a generic automorphism of ${\mathbb Q}(\xi)\subset K$,
hence $(K,\sigma)\models ACFA$.

In the three cases, we construct a functional equation in
$\sigma$. We denote the coordinates on ${\mathbb G}_{m}^{n}$ by
$(x_{1},\ldots,x_{n})$\\

(a,b).$((\sigma(x_{1})-x_{1}^{2})(\sigma(x_{1})+x_{1}^{2}),\ldots,(\sigma(x_{n})-x_{n}^{2})(\sigma(x_{n})+x_{n}^{2}))=\textbf{0}$\ $\textbf{(1)}$\\

(c).
$((\sigma(x_{1})-x_{1})(\sigma(x_{1})+x_{1}),\ldots,(\sigma(x_{n})-x_{n})(\sigma(x_{n})+x_{n}))=\textbf{0}$\\
(*)

If we denote the action by $\sigma$ on ${\mathbb G}_{m}^{n}(K)$ as
$\sigma(x_{1},\ldots x_{n})=(\sigma(x_{1}),\ldots,\sigma(x_{n}))$,
and for ${m\in\mathcal Z}$, define
$m(x_{1},\ldots,x_{n})=(x_{1}^{m},\ldots,x_{n}^{m})$, then any
polynomial $P(X)$ with integer coefficients defines an
endomorphism of ${\mathbb G}_{m}^{n}(K)$. We can then write the functional
equation\\

$(\sigma(x_{1})-x_{1}^{p},\ldots,\sigma(x_{n})-x_{n}^{p})=\textbf{0}$\\

as $Ker(P(\sigma))$ where $P(X)$ denotes the polynomial $X-p$.\\

We will denote the subgroups of ${\mathbb G}_{m}^{n}(K)$ defined by the
polynomial $X-p$ for $p\in {\mathbb Z}$ by ${\mathbb G}_{p}$.\\

The functional equations from $(*)$ define subgroups of
${\mathbb G}_{m}^{n}(K)$, which we denote by ${\mathbb G}^{1}$ and ${\mathbb G}^{2}$.\\

We need the following definitions;\\

Let $A$ be a $\sigma$-definable subgroup of ${\mathbb G}_{m}^{n}(K)$. We say
that $A$ is LMS (stable, stably embedded and 1-based) if every
$\sigma$-definable subset $X$ of $A^{r}$ (possibly with parameters
outside A) is a finite Boolean combination of cosets of definable
subgroups of $A^{r}$. We say that $A$ is algebraically modular
(ALM), if for every $\sigma$ definable $X\subset A^{r}$, the
Zariski closure of $X$ is a finite union of cosets of algebraic subgroups of ${\mathbb G}_{m}^{n}(K)$\\

The following result is proved in \cite{Hr} (Corollary 4.1.13);\\

Let $p(T)$ be a polynomial with integer coefficients defining an
endomorphism $p(\sigma)$ of ${\mathbb G}_{m}^{n}(K)$. Then $Ker(p(\sigma))$
is LMS iff $p(T)$ has no cyclotomic factors. \\

Applying this result to the polynomial $X-p$ for $p\in {\mathbb
Z}$ gives that the groups ${\mathbb G}_{p}$ are LMS iff $p\notin\{1,-1\}$.\\

If $A$ is LMS, then $A$ is ALM. This is almost immediate from the
definitions. Let $X\subset A^{r}$ be $\sigma$-definable. As $A$ is
LMS, we can write;\\

$X=\bigcup_{i=1}^{n}(C_{i}\setminus D_{i})$\\

where the $C_{i}$ are disjoint cosets of groups $H_{i}\subset
A^{r}$ and $D_{i}$ is contained in a finite union of cosets of
subgroups of $H_{i}$ of infinite index. We then clearly have that;\\

$\overline{X}=\overline{\bigcup_{i=1}^{n}(C_{i}\setminus
D_{i})}=\bigcup_{i=1}^{n}\overline{C_{i}}$\\

Each $\overline C_{i}$ is clearly an algebraic subgroup of
${\mathbb G}_{m}^{n}(K)$, which gives the result.\\

In particular, the groups ${\mathbb G}_{p}$ are ALM for $p\notin \{1,-1\}$.\\

We claim that the group ${\mathbb G}^{1}$ is also ALM. We have that
${\mathbb G}^{1}=\bigcup_{\delta_{1},\ldots,\delta_{n}}W_{\delta_{1},\ldots,\delta_{n}}$
for $\delta_{j}\in\{0,1\}$where $W_{\delta_{1},\ldots,\delta_{n}}$
is the $\sigma$-variety defined by the functional equation\\

$(\sigma(x_{1})+(-1)^{\delta_{1}}x_{1}^{2},\ldots,\sigma(x_{n})+(-1)^{\delta_{n}}x_{n}^{2})=\textbf{0}$\\

Assuming $W_{\delta_{1},\ldots,\delta_{n}}\neq\emptyset$, we can
find $\bar w\in W_{\delta_{1},\ldots,\delta_{n}}\neq\emptyset$,
then the map $\theta:W_{\delta_{1},\ldots,\delta_{n}}\rightarrow
{\mathbb G}_{2}$ given by
$\theta(x_{1},\ldots,x_{n})=(w_{1}x_{1},\ldots,w_{n}x_{n})$ is
easily checked to be a definable bijection. It follows easily that
the property of ALM is inherited by each
$W_{\delta_{1},\ldots,\delta_{n}}$, hence by ${\mathbb G}^{1}$.\\

We now claim the following for the situations a. and b.;\\

$\bigcup_{1\leq j\leq N}\bar w_{j}T_{j}\subseteq \overline{V\cap
{\mathbb G}^{1}}=\bigcup_{1\leq j\leq M}\bar a_{j}T_{j}$, where the left
hand side consists of the union of all maximal \emph{torsion}
cosets on $V$ and the right hand side consists of a union of
cosets of algebraic subgroups of ${\mathbb G}_{m}^{n}(K)$. By the property
of ALM, we clearly have the right hand equality. For the left hand
inclusion, suppose that $\bar w_{j}T_{j}$ defines a maximal
torsion coset on $V$, where $\bar w_{j}$ is chosen in the form
given above. Then, by construction, $\bar w_{j}\in {\mathbb G}^{1}$. Now
consider the variety $W\subset {\mathbb G}_{m}^{r}(K)\times {\mathbb G}_{m}^{r}(K)$
defined by the equations
$<y_{1}-x_{1}^{2},\ldots,y_{r}-x_{r}^{2}>$. This has dimension $r$
and projects dominantly onto the factors ${\mathbb G}_{m}^{r}(K)$. We claim
that there exists $(x_{1},\ldots,x_{r})\in {\mathbb G}_{m}^{r}(K)$, generic
over $acl(Q)$, with $(\bar x,\sigma(\bar x))\in W$. By
compactness, it is sufficient to prove that for any proper closed
subvariety $Y$ of ${\mathbb G}_{m}^{r}(K)$, defined over $acl(\mathcal Q)$,
we can find $\bar x$ with $(\bar x,\sigma(\bar x))\in W\setminus
(Y\times {\mathbb G}_{m}^{r})\cap W$. This can be done exactly using the
axiom scheme for $ACFA$. Now let $\Phi$ define the isomorphism of
$T_{j}$ with ${\mathbb G}_{m}^{r}(K)$, then $\bar w_{j}\Phi(\bar x)$ belongs
to ${\bar w_{j}}T_{j}$ and is generic over the field defining the
union of the maximal torsion cosets. By construction, the point
$\bar w_{j}\Phi(\bar x)$ lies inside $V\cap {\mathbb G}^{1}$. Hence, $\bar
w_{j}T_{j}\subseteq\overline{V\cap {\mathbb G}^{1}}$ as required.\\

The functional equation $(c)$ does not define an ALM
$\sigma$-variety, hence this approach fails (can it be defined
using exponents $\geq 1$??)\\

Instead, we can use the method given in \cite{Hr} to handle this
case;\\

Fix prime numbers $p$ and $q$ with $p\neq q$. We claim there
exists $\sigma\in Gal({\overline {\mathbb Q}}/{\mathbb Q})$ with
$\sigma(\omega)=\omega^{p}$ for all primitive $K$'th roots of
unity with $(K,p)=1$ and $\sigma(\omega)=\omega^{q}$ for all
primitive $K$'th roots with $K=p^{s}$. Let ${\mathbb Q}_{p}$ be
the completion of ${\mathbb Q}$ at the prime $p$ and
$Q_{p}^{unr}=\bigcup_{(K,p)=1}{\mathbb Q}(\xi_{K})$ with
$\xi_{K}$ a primitive $K$'th root of unity. The residue field of
${\mathbb Q}_{p}(\xi_{K})$ is $F_{p}(\xi_{K})$. The extension is
unramified, so $Gal({\mathbb Q_{p}}(\xi_{K})/{
Q})=Gal(F_{p}(\xi_{k})/F_{p})$, with canonical generator
$\phi_{p}$ lifting Frobenius given by
$\phi_{p}(\xi_{K})=\xi_{K}^{p}$. Clearly $\phi_{p}$ lifts to an
element of $Gal({\mathbb Q}_{p}^{unr}/{\mathcal Q_{p}})$ such
that $\phi_{p}(\xi_{K})=\xi_{K}^{p}$ for all primitive $K$'th
roots with $(K,p)=1$. Similarily, we can find $\phi_{q}$ with
$\phi_{q}(\xi_{K})=\xi_{K}^{q}$ for all primitive $K$'th roots
with $(K,q)=1$. As $p\neq q$, we have
$\phi_{q}(\xi_{K})=\xi_{K}^{q}$ for $K=p^{s}$. By restriction,
$\phi_{p}$ and $\phi_{q}$ define automorphisms of $L$ and $M$
where $L$ is the extension of ${\mathbb Q}$ obtained by adding
primitive roots of unity $\xi_{K}$ with $(K,p)=1$ and $M$ is
obtained by adding primitive roots of unity with $K=p^{s}$. These
fields are linearly disjoint over ${\mathbb Q}$, hence we can
find a single automorphism $\sigma$ on $\overline {\mathbb Q}$
with $\sigma|K=\phi_{p}$ and $\sigma|L=\phi_{q}$. As usual, we can
extend $\sigma$ to a generic automorphism of $K$. Now we consider
the endomorphism on ${\mathbb G}_{m}^{n}(K)$ defined by $p(\sigma)$ where
$p(X)$ is the polynomial $(X-p)(X-q)$. We claim that
${\mathbb G}_{m}^{n}(K^{cycl})\subset Ker(p(\sigma))$. First note that if
$\bar w$ is a cyclotomic point of order $K$ with $(K,p)=1$, then
we can write $\bar w=(\omega_{1},\ldots,\omega_{n})$ with the
$\omega_{i}$ primitive roots of unity of order prime to $p$. By
construction $\sigma(\bar w)=p(\bar w)$, hence $(\sigma-p).(\bar
w)=\textbf{1}$. By the same argument, if $\bar w$ is a cyclotomic
point of order $p^{s}$, then $(\sigma-q).(\bar w)=\textbf{1}$. Now
suppose that $\bar w$ is an arbitrary cyclotomic point of order
$L$. Let $L=p^{s}K$ where $(K,p)=1$. We can find integers $a$ and
$b$ with $aK+bp^{s}=1$, hence $\bar w=(\bar w)^{aK}(\bar
w)^{bp^{s}}=(\bar w_{1})^{a}(\bar w_{2})^{b}$ where $\bar w_{1}$
is cyclotomic of order $p^{s}$ and $\bar w_{2}$ is cyclotomic of
order $K$. We then have that;\\

$p(\sigma)(\bar w)=(p(\sigma)(\bar w_{1}))^{a}(p(\sigma)(\bar
w_{2}))^{b}=((\sigma-p).\textbf{1})^{a}.((\sigma-q).\textbf{1})^{b}=\textbf{1}$\\

as required. We now have the following explicit functional
equation for $Ker(p(\sigma))$ using coordinates
$(x_{1},\ldots,x_{n})$ on ${\mathbb G}_{m}^{n}(K)$;\\

$\ \ \ \ \ \ \ p(\sigma).(x_{1},\ldots,x_{n})=\textbf{1}$\ \ \ \ \ \ \ \textbf{(2)}\\

$\Longleftrightarrow
(\sigma-p).{(\sigma(x_{1}),\ldots,\sigma(x_{n}))\over
(x_{1}^{q},\ldots,x_{n}^{q})}=\textbf{1}$\\

$\Longleftrightarrow
{(\sigma^{2}(x_{1}),\ldots,\sigma^{2}(x_{n})).(x_{1}^{pq},\ldots,x_{n}^{pq})\over
((\sigma(x_{1}))^{q},\ldots,((\sigma(x_{n}))^{q})).((\sigma(x_{1}))^{p},\ldots,((\sigma(x_{n}))^{p}))}=\textbf{1}$\\

$\Longleftrightarrow((\sigma^{2}x_{1})(x_{1}^{pq})-(\sigma
x_{1})^{p}(\sigma
x_{1})^{q},..,(\sigma^{2}x_{n})(x_{n}^{pq})-(\sigma
x_{n})^{p}(\sigma x_{n})^{q})=\textbf{0}$\\

Let ${\mathbb G}_{p,q}=Ker(p(\sigma))$. Using the theorem above, we know
that ${\mathbb G}_{p,q}$ is ALM. Moreover, we again have that;\\

$\bigcup_{1\leq j\leq N}\bar w_{j}
T_{j}\subseteq\overline{V\cap {\mathbb G}_{p,q}}=\bigcup\bar a_{j}T_{j}$. (**)\\

where the left hand union is over all maximal \emph{torsion}
cosets and the right hand union consists of cosets of algebraic
subgroups of ${\mathbb G}_{m}^{n}(K)$. We again have the right hand equality
by the ALM property. For the left hand inclusion, note that
$\bigcup_{1\leq j\leq N}\bar w_{j} T_{j}=\overline {V(K^{cycl})}$
and $V(K^{cycl})\subset V\cap
{\mathbb G}_{p,q}$ by construction of ${\mathbb G}_{p,q}$.\\

We now use the functional equations to obtain explicit bounds on
the number of maximal torsion cosets on $V$. This is done by
finding a bound $N$ for the number of irreducible components of
${\overline {V\cap G}}$ where $G$ is one of the groups
${\mathbb G}_{p,q},{\mathbb G}^{1}$.  By $(**)$ and Lemma 0.1, this will give a bound
$N$ for the number of maximal torsion cosets on $V$. We require
the following lemma (Proposition 2.2.1 in
\cite{Hr})\\

\begin{lemma}

Let $P_{n}(K)$ be $n$-dimensional projective space, and
$(K,\sigma)$ a difference closed difference field. Let $S$ be a
subvariety of $P_{n}^{l}(K)$ defined over $K$. Let \\

$Z$=Zariski closure of $\{x\in
P_{n}(K):(x,\sigma(x),\ldots,\sigma^{l-1}(x))\in S\}$.\\

Then $deg(Z)\leq deg(S)^{2^{dim(S)}}$. In particular $Z$ has at
most $deg(S)^{2^{dim(S)}}$ irreducible components.\\

\end{lemma}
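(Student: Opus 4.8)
The plan is to reduce the statement to an iterated application of Bézout's theorem, following the standard argument for bounding the degree of the Zariski closure of the graph locus of a difference equation. First I would set $d=\deg(S)$ and $m=\dim(S)$, and work inside $P_n^l(K)$ with the coordinate projections $\pi_i:P_n^l\to P_n$ onto the $i$-th factor. Consider the $\sigma$-twisted diagonal map $\Delta_\sigma:P_n\to P_n^l$ sending $x\mapsto(x,\sigma(x),\ldots,\sigma^{l-1}(x))$; the set whose closure we want is exactly $\Delta_\sigma^{-1}(S)$, and $Z=\overline{\Delta_\sigma^{-1}(S)}$. The key point is that, since $\sigma$ is an automorphism and $(K,\sigma)$ is difference closed (so in particular the fixed field issues do not obstruct genericity), pulling back along $\sigma$ preserves degrees of subvarieties: $\deg(\sigma(W))=\deg(W)$ for any $W$, because $\sigma$ acts on coordinates and an automorphism of the field does not change the number of intersection points of $W$ with a generic linear space.

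Next I would stratify $S$ by its irreducible components and, since degree is additive over components and the bound $d^{2^m}$ is superadditive in the relevant sense, reduce to the case $S$ irreducible of dimension $m$ and degree $\le d$. The inductive engine is: write $S\subseteq S'\times P_n$ where $S'=\overline{\pi_{1\cdots(l-1)}(S)}\subseteq P_n^{l-1}$ is the projection onto the first $l-1$ factors, with $\deg(S')\le\deg(S)$ and $\dim(S')\le\dim(S)$; let $Z'$ be the Zariski closure of $\{x:(x,\sigma(x),\ldots,\sigma^{l-2}(x))\in S'\}$, so by induction on $l$ we have $\deg(Z')\le d^{2^{\dim(S')}}\le d^{2^m}$. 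Then $Z$ is (a component of) the set of $x\in Z'$ such that $(x,\sigma^{l-1}(x))$ lies in the appropriate fibre condition cut out by $S$ over $S'$; intersecting $\sigma^{l-2}(Z')$-type data with the locus defined by $S$ and pulling back by $\sigma$ multiplies degrees, giving a Bézout estimate of the shape $\deg(Z)\le\deg(Z')\cdot\deg(S)\le d^{2^m}\cdot d$. The exponent bookkeeping $2^m$ comes from the fact that at each of the $\dim(S)$ stages where the fibre dimension can drop, the degree can square rather than merely multiply, because both the variety being cut and the cutting locus have already accumulated the ambient degree; this is exactly where $d^{2^{\dim S}}$ rather than $d^l$ appears.

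The main obstacle I expect is making the "degree can square at each step" estimate precise while keeping the induction clean — specifically, controlling the degree of the Zariski closure of a projection and of a pullback under $\sigma$ simultaneously, and verifying that the fibre-dimension drops are correctly counted by $\dim(S)$ rather than by $l$ or $n$. The cleanest route is to invoke a general lemma (of the type: if $W\subseteq P_n^l$ has degree $e$ and $T\subseteq P_n^l$ has degree $f$, then every component of $W\cap T$ has degree $\le ef$, and $\overline{\sigma^{-1}(W)}$ has degree $\le e$) and then run the induction on $l$ with the single invariant $\deg(Z_l)\le d^{2^{\dim S}}$, checking the base case $l=1$ where $Z=\overline{\{x:x\in S\}}=S$ trivially. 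The final sentence of the statement, that $Z$ has at most $d^{2^{\dim S}}$ irreducible components, is then immediate: each irreducible component of a variety of degree $D$ contributes at least $1$ to the degree, so the number of components is at most $D$.
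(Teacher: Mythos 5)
The paper offers no proof of this lemma; it is quoted verbatim as Proposition 2.2.1 of \cite{Hr}, so your attempt can only be measured against Hrushovski's argument. Against that standard, your plan has a genuine gap at its centre: the inductive structure you propose does not produce the stated bound. You induct on $l$, the number of factors, and your inductive step reads $\deg(Z)\leq \deg(Z')\cdot\deg(S)\leq d^{2^{m}}\cdot d$, which is $d^{2^{m}+1}$ and strictly exceeds the claimed $d^{2^{m}}$ --- the induction does not close. More tellingly, an induction on $l$ that multiplies the degree by $d$ at each stage can only ever yield a bound of the shape $d^{l}$, whereas the bound to be proved is $d^{2^{\dim S}}$, which depends on $\dim(S)$ and not on $l$ at all. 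You correctly sense that ``the degree can square at each step'' and that the steps should be ``counted by $\dim(S)$ rather than by $l$,'' but you name this as the main obstacle rather than resolving it, and the mechanism you set up (projection to the first $l-1$ factors, then one Bézout intersection) contains no squaring and no dimension count.

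The actual proof runs the induction on $\dim(S)$, not on $l$. Reducing to $S$ irreducible, one observes that if $(a,\sigma a,\ldots,\sigma^{l-1}a)\in S$ then also $(\sigma a,\ldots,\sigma^{l}a)\in S^{\sigma}$, so the extended tuple lies in $(S\times {\mathbb P}_{n})\cap({\mathbb P}_{n}\times S^{\sigma})$ inside ${\mathbb P}_{n}^{l+1}$; Bézout bounds the degree of this intersection by $\deg(S)^{2}$ --- this single self-intersection with a $\sigma$-shifted copy is where the squaring comes from. If the relevant component has dimension strictly less than $\dim(S)$, the inductive hypothesis gives $\deg(Z)\leq(\deg(S)^{2})^{2^{\dim(S)-1}}=\deg(S)^{2^{\dim(S)}}$; if the dimension does not drop, one argues separately that $S$ is already invariant under the shift and $Z$ is essentially the closure of the projection of $S$ to the first factor, of degree at most $\deg(S)$. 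Your plan contains neither the shifted self-intersection nor the dichotomy on whether the dimension drops, and these are the two ideas that make the exponent $2^{\dim(S)}$ appear. The peripheral points you make (degree is preserved by the field automorphism $\sigma$, degree is additive over components so one may assume $S$ irreducible, and the number of irreducible components is at most the degree) are all correct, but they do not substitute for the missing engine.
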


Here, $deg(S)$ is the sum of the multi-degrees of $S$. It is a
straightforward exercise to rephrase this result replacing
${\mathbb P}_{n}(K)$ by ${\mathbb G}_{m}^{n}(K)$ and ${\mathbb P}_{n}^{l}(K)$ by
${\mathbb G}_{m}^{nl}(K)$.\\

Now, in the case of the functional equation \textbf{(1)} for the
cases $(a,b)$ and the functional equation \textbf{(2)} which
covers cases $(a,b,c)$, we construct the following varieties
$W_{\textbf{1}}$ and $W_{\textbf{2}}$.\\

For coordinates $(x_{1},\ldots,x_{n},y_{1},\ldots,y_{n})$ on
${\mathbb G}_{m}^{2n}(K)$;\\

$W_{\textbf{1}}=<(y_{1}-x_{1}^{2})(y_{1}+x_{1}^{2}),\ldots,(y_{n}-x_{n}^{2})(y_{n}+x_{n}^{2})>$\\

For coordinates
$(x_{1},\ldots,x_{n},y_{1},\ldots,y_{n},z_{1},\ldots,z_{n})$ on
${\mathbb G}_{m}^{3n}(K)$;\\

$W_{\textbf{2}}=<z_{1}x_{1}^{pq}-y_{1}^{p+q},\ldots,z_{n}x_{n}^{pq}-y_{n}^{p+q}>$\\

A straightforward calculation gives that
$deg(W_{\textbf{1}})=3^{n}$ and $dim(W_{\textbf{1}})=n$ whereas
$deg(W_{\textbf{2}})=(pq+1)^{n}$ and $dim(W_{\textbf{2}})=2n$\\

As $\sigma$ fixes $V$, we have in cases $(a,b)$ that;\\

$\bigcup_{1\leq j\leq N}\bar w_{j} T_{j}\subseteq\overline{\{x\in
{\mathbb G}_{m}^{n}(K):(x,\sigma x)\in V^{2}\cap W_{\textbf{1}}\}}$\\

and in cases $(a,b,c)$ that;\\

$\bigcup_{1\leq j\leq N}\bar w_{j} T_{j}\subseteq\overline{\{x\in
{\mathbb G}_{m}^{n}(K):(x,\sigma x,\sigma^{2}x)\in V^{3}\cap
W_{\textbf{2}}\}}$\\

We finally need the following version of Bezout's theorem for
counting the components of intersections in multi-projective
space;\\

\begin{lemma}{Bezout's Theorem}

Let $V,W$ be subvarieties of ${\mathbb P}_{n}^{l}(K)$. Let
$Z_{1},\ldots,Z_{t}$ be the irreducible components of $V\cap W$.
Then $\sum_{i=1}^{t}deg(Z_{i})\leq deg(V)deg(W)$.

\end{lemma}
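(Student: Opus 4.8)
The plan is to argue inside the Chow ring of the smooth ambient variety $\mathbb{P}_n^l(K)$. Since $K$ is algebraically closed (it underlies a model of $ACFA$), we have $A^{*}(\mathbb{P}_n^l) \cong \mathbb{Z}[h_1,\ldots,h_l]/(h_1^{n+1},\ldots,h_l^{n+1})$, where $h_i$ is the hyperplane class pulled back from the $i$-th factor. The monomials $h^{a} := h_1^{a_1}\cdots h_l^{a_l}$ with $0\le a_i\le n$ form a $\mathbb{Z}$-basis; the class of any subvariety $X$ expands in this basis with non-negative integer coefficients, and the sum of those coefficients is exactly $\deg(X)$, the sum of the multi-degrees of $X$. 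One may assume $V$ and $W$ irreducible, since for reducible $V,W$ one runs the argument on pairs of irreducible components and adds, using that every component of $V\cap W$ occurs among the components of some intersection of a component of $V$ with a component of $W$.

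First I would treat the proper case, where every irreducible component $Z_i$ of $V\cap W$ has dimension exactly $\dim V+\dim W-nl$. Write $[V]=\sum_{a}\delta_{a}h^{a}$ and $[W]=\sum_{b}\epsilon_{b}h^{b}$ with $\delta_{a},\epsilon_{b}\ge 0$. Multiplying these out in the polynomial ring $\mathbb{Z}[h_1,\ldots,h_l]$ and then reducing modulo the monomial ideal $(h_1^{n+1},\ldots,h_l^{n+1})$ only deletes monomials (it never rewrites one monomial as a combination of others), so $[V]\cdot[W]=\sum_{c}\gamma_{c}h^{c}$ with $\gamma_{c}\ge 0$ and $\sum_{c}\gamma_{c}\le\bigl(\sum_{a}\delta_{a}\bigr)\bigl(\sum_{b}\epsilon_{b}\bigr)=\deg(V)\deg(W)$. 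On the other hand, smoothness of $\mathbb{P}_n^l$ together with properness of the intersection gives $[V]\cdot[W]=\sum_{i}m_{i}[Z_i]$ with all intersection multiplicities $m_i\ge 1$. Comparing the coefficients in the monomial basis on the two sides and summing, $\sum_{i}\deg(Z_i)\le\sum_{i}m_{i}\deg(Z_i)=\sum_{c}\gamma_{c}\le\deg(V)\deg(W)$, which is the asserted inequality in this case.

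The main obstacle is the improper case, where some $Z_i$ has dimension strictly larger than the expected one and the identity $[V]\cdot[W]=\sum_i m_i[Z_i]$ no longer makes sense dimensionally. Here I would invoke the refined intersection product (Fulton's refined Bézout theorem, which holds verbatim over an algebraically closed field and for products of projective spaces): the product class $[V]\cdot[W]$ is represented by a canonical, possibly mixed-dimensional cycle $\beta$ supported on $V\cap W$ in which each component $Z_i$ appears with coefficient $\ge 1$, and whose total degree — the sum of the degrees of all of its components — equals $\deg(V)\deg(W)$, again by the coefficient estimate of the previous paragraph together with the projection formula. Then $\sum_i\deg(Z_i)\le\deg(\beta)=\deg(V)\deg(W)$ exactly as before. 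An alternative route is to reduce the improper case to the proper one by replacing $W$ with a generic translate under $PGL_{n+1}^{l}(K)$: in characteristic zero Kleiman transversality makes the new intersection proper, the translate has the same class, and the degrees of the components cannot drop as the translation parameter specializes to the identity. The delicate point in that route is controlling the "excess" components of $V\cap W$ that are not limits of components of the generic intersection, and this is precisely what the refined product handles automatically.

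Finally, the case $l=1$ of the argument recovers the classical Bézout theorem, and nothing beyond the multi-index bookkeeping above changes in the multi-projective setting; this also makes it transparent that the bound is insensitive to extension of the base field, so it suffices to argue over an algebraic closure.
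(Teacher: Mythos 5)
The paper offers no proof of this lemma at all: it is stated as a known multi-projective form of B\'ezout's theorem (the version used by Hrushovski, ultimately resting on Fulton's refined intersection theory), and the text proceeds immediately to apply it. So there is no ``paper proof'' to compare against, and your write-up is strictly more than the paper provides. On its merits: your treatment of the proper case is complete and correct --- the Chow ring of ${\mathbb P}_n^l$ is the truncated polynomial ring you describe, classes of subvarieties have non-negative coordinates in the monomial basis summing to the sum of multi-degrees, truncation modulo $(h_1^{n+1},\ldots,h_l^{n+1})$ only discards non-negative terms, and for a proper intersection on a smooth ambient variety $[V]\cdot[W]=\sum_i m_i[Z_i]$ with $m_i\geq 1$, so the coefficient comparison closes the argument. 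Two caveats. First, a small slip: the total coefficient sum of $[V]\cdot[W]$ is $\leq\deg(V)\deg(W)$, not $=$, precisely because of the truncation; you only need $\leq$, so nothing breaks. Second, and more substantively, the improper case --- which is the entire difficulty of the lemma, since the functional-equation varieties $W_{\textbf{1}},W_{\textbf{2}}$ need not meet $V^2,V^3$ properly --- is resolved in your proposal by citing the refined B\'ezout theorem ``verbatim for products of projective spaces.'' That citation is essentially the statement being proved: the assertions that every component of $V\cap W$ is a distinguished variety appearing with coefficient at least $1$, and that the excess contributions have non-negative degree with respect to the sum-of-multidegrees functional, are exactly where the positivity of the tangent bundle of ${\mathbb P}_n^l$ enters, and you assert rather than verify that Fulton's $\mathbb{P}^n$ argument transfers. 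You are right that the generic-translate route does not repair this on its own, since the excess components of $V\cap W$ are invisible in the limit of the proper generic intersections. In short: as a citation-backed justification your proposal is sound and consistent with how the paper (implicitly) treats the lemma; as a self-contained proof it has a gap exactly at the improper case, which is deferred to the theorem it is meant to establish.
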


Applying this to the current situation, we have $deg(V^{2}\cap
W_{\textbf{1}})\leq deg(V)^{2}3^{n}$ and $deg(V^{3}\cap
W_{\textbf{2}})\leq deg(V)^{3}(pq+1)^{n}$. We also have that
$dim(V^{2}\cap W_{\textbf{1}})\leq 2dim(V)$ and $dim(V^{3}\cap
W_{\textbf{2}})\leq 3dim(V)$. Now, using Lemma 0.2, we have
that;\\

$N\leq (deg(V)^{2}3^{n})^{2^{2dim(V)}}=exp[2^{2dim(V)}log(d^{2}3^{n})]$\ in cases $(a,b)$\\

and \\

$N\leq (deg(V)^{3}(pq+1)^{n})^{2^{3dim(V)}}$\ in cases
$(a,b,c)$.\\

where $d=deg(V)$\\

Taking $p=2$ and $q=3$ gives \\

$N\leq (deg(V)^{3}7^{n})^{2^{3dim(V)}}=exp[2^{3dim(V)}log(d^{3}7^{n})]$\ in cases $(a,b,c)$.\\

Using the toric version of Bezout's theorem in \cite{Fu}, we can
replace $deg(V)$ by $vol(N(V))$, where $N(V)$ is the Newton
polytope associated to $V$.This gives a slightly more refined
estimate for subvarieties $V$ of ${\mathbb G}_{m}^{n}(K)$. A comparison of
the estimates from \cite{Sc} shows that the estimate in this paper
is better in certain situations and worse in others,
depending on the dimension and degree of $V$.\\


\begin{thebibliography}{99}

\bibitem{Fu} W. Fulton, Introduction to Toric Varieties,
Princeton University Press, 1993.\\

\bibitem{Hr} E. Hrushovski, The Manin-Mumford Conjecture and the model
 theory of difference fields, Annals of Pure and Applied Logic 112
(2001) 43-115\\


\bibitem{Mc} J. McKee, C. Smyth, There are Salem Numbers of Every
Trace, Bulletin of the London Mathematical Society 37(2005)
25-26\\

\bibitem{Sc} W. Schmidt, Heights of Points on Subvarieties of
$G_{m}^{n}$ Number Theory, Paris (1993-1994), 157-187, London
Mathematical Society Lecture Notes Series, 235, Cambridge
University Press, Cambridge, 1996\\



\end{thebibliography}
\end{document}